\documentclass[reqno]{amsart}
\usepackage[marginratio=1:1]{geometry}
\usepackage[hidelinks]{hyperref}
\usepackage{calc}
\newsavebox\CBox
\newcommand\hcancel[2][0.5pt]{%
  \ifmmode\sbox\CBox{$#2$}\else\sbox\CBox{#2}\fi%
  \makebox[0pt][l]{\usebox\CBox}%
  \rule[0.5\ht\CBox-#1/2]{\wd\CBox}{#1}}
\usepackage{blindtext}
\usepackage{color}
\usepackage{hyperref,url}
\usepackage{comment}
\usepackage{float}
\usepackage{hyperref}
\usepackage{enumerate}
\usepackage{enumitem}   
\usepackage{bbm}
\usepackage{cancel}
\usepackage{mathrsfs}
\usepackage{colonequals}
\usepackage{pdfpages}

\usepackage{amsmath,amsfonts,amsthm,bm}
\usepackage{mathrsfs}  
\usepackage{xcolor}
\usepackage{amsfonts}
\usepackage{amssymb}
\usepackage{amsmath}
\usepackage{mathtools}
\usepackage{mathrsfs}  

\usepackage[normalem]{ulem}

\numberwithin{equation}{section}
\usepackage[toc,page]{appendix}
\usepackage{tikz-cd}
\theoremstyle{definition}

\newtheorem{definicao}{Definition}[section]

\theoremstyle{plain}

\newtheorem{teorema}[definicao]{Theorem}
\newtheorem{suposicao}{Hypothesis}

\newtheorem{lema}[definicao]{Lemma}

\newtheorem{corolario}[definicao]{Corollary}

\newenvironment{TheoremproofA}[1]{\par\noindent{\emph{Proof of Corollary \ref{cor1}}} \space#1}{\leavevmode\unskip\penalty9999 \hbox{}\nobreak\hfill\quad\hbox{$\qed$}}
\newenvironment{TheoremproofB}[1]{\par\noindent{\emph{Proof of Theorem  \ref{thm1}.}}  }{\leavevmode\unskip\penalty9999 \newline\hbox{}\nobreak\hfill \quad\hbox{$\qed$}}
\newenvironment{TheoremproofC}[1]{\par\noindent{\textit{Proof of Theorem \ref{thm2}}} \space#1}{\leavevmode\unskip\penalty9999 \hbox{}\nobreak\hfill\quad\hbox{$\qed$}}
\usepackage{scalerel}

\newenvironment{TheoremproofE}[1]{\par\noindent{\emph{Proof of Theorem \ref{thm3}}} \space#1}{\leavevmode\unskip\penalty9999 \hbox{}\nobreak\hfill\quad\hbox{$\qed$}}
\definecolor{roxo}{rgb}{0.44, 0.16, 0.39}
\definecolor{ao(english)}{rgb}{0.0, 0.5, 0.0}
\definecolor{dmagenta}{RGB}{139, 0, 139}
\definecolor{dgreen}{RGB}{0,90,0}
\definecolor{navy}{RGB}{0,0,128}

\usepackage{stackengine}

\def\d{\mathrm d}
\def \d {\mathrm{d}}

\definecolor{iblue}{RGB}{0, 35, 194}

\title[Propagation of chaos]
{Uniform in time propagation of chaos for noisy mean-field coupled maps}

\author{Giuseppe Tenaglia}
\address{Department of Mathematics, Imperial College London,
London SW7 2AZ, UK}
\email{giuseppe.tenaglia20@imperial.ac.uk}

\author{Matteo Tanzi}
\address{Department of Mathematics, King's College London,
Strand, London WC2R 2LS, UK}
\email{matteo.tanzi@kcl.ac.uk}

\subjclass[2020]{60K35, 60J05, 37H10}

\keywords{propagation of chaos, mean-field systems, interacting Markov chains,
additive noise, sub-Gaussian concentration, Dobrushin-Wasserstein distance,
self-consistent transfer operators}

\begin{document}

\begin{abstract}
We study discrete-time $N$-dimensional mean-field systems on the torus subject to additive noise whose probability density is bounded away from zero. Given a Lipschitz one-particle map and interaction term, we prove that, if the lower bound on the noise density is sufficiently large, the $N$-dimensional transfer operator $\mathcal P_N$ preserves a dimension-independent class of sub-Gaussian probability measures. Moreover, $\mathcal P_N$ is a one-step contraction in the Dobrushin-Wasserstein distance and therefore admits a unique invariant measure $\rho_N$, which is itself sub-Gaussian. Under the same condition on the noise strength, we show that the associated self-consistent transfer operator is a one-step contraction in Wasserstein distance and hence admits a unique fixed point $\rho$. We further prove, using the contraction estimates established in Theorem \ref{thm1}, that $\mathcal P_N$ preserves an $O(N^{-1/2})$ neighbourhood of the product measure $\rho^{\otimes N}$ in the Dobrushin-Wasserstein metric, and in particular that $\rho_N$ belongs to this neighbourhood. Finally, we establish uniform-in-time propagation of chaos in the Dobrushin-Wasserstein metric and, under the additional assumption that the noise density is of bounded variation, in total variation for every fixed-dimensional marginal. 
\end{abstract}

\maketitle

Many systems of scientific interest can be described as collections of interacting units, whose mutual interactions determine the global behaviour of the system. Natural examples arise in population dynamics, neuroscience, statistical physics, epidemiology, and deep learning \cite{ChaintronDiez2022,Touboul2012,SirignanoSpiliopoulos2020}. From a mathematical point of view, such systems pose a serious challenge: even when the dynamics of each individual unit is relatively simple, the interactions among many of them can generate remarkably complex collective behavior.

A particularly challenging class is given by discrete-time mean-field coupled systems. They consist of many interacting units, whose pairwise interaction scales inversely with the size of the system. When the number of units is very large, the evolution of the empirical state is well approximated by a nonlinear operator---called the self-consistent transfer operator---acting on the space of probability measures associated with the dynamics of a single element \cite{TanziReview2023,Galatolo2022}.

There is by now a substantial literature on self-consistent transfer operators associated to mean-field coupled systems \cite{TanziReview2023,Galatolo2022,BahsounLiveraniSelley2023,BahsounKorepanov2024,CastorriniGalatoloTanzi2025,CastorriniGalatoloTanzi2026}. Much less is known, however, about the ergodic properties of the finite-dimensional dynamics. Indeed, many of the results obtained so far bypass a direct analysis of the high-dimensional systems and focus on comparing the evolution of the empirical measure of the particle system with the self-consistent dynamics \cite{DelMoralRio2011,BudhirajaMajumder2015,Vuckovic2026}. These results aim to establish propagation of chaos \cite{Sznitman1991,ChaintronDiez2022}: starting from an asymptotically independent family of initial distributions, the $k$-dimensional marginals of the $N$-particle system converge, as $N\to\infty$, to the $k$-fold product of the corresponding self-consistent one-particle evolution.

Currently, there is no well-developed ergodic-theoretic framework for the study of high-dimensional dynamics. At a fundamental level, it is not clear which geometric and statistical quantities continue to provide a meaningful description of the collective behavior as the number of components grows. The main obstruction is the curse of dimensionality: many quantities that effectively describe low-dimensional dynamics deteriorate with the dimension itself.

A first step toward addressing this gap was taken in \cite{Tanzi2022}, which introduced an operator-theoretic framework for mean-field coupled systems driven by uniformly expanding maps. Provided expansion dominates interaction, the dynamics preserve a class of probability measures that remain quantitatively close to product measures, with spatial dependencies controlled via a Dobrushin-type matrix. Measures in this class consequently satisfy concentration inequalities typical of weakly dependent systems. While \cite{Tanzi2022} established that the invariant measure belongs to this class—exploiting this to prove propagation of chaos—their approach does not yield a dimension-uniform rate of convergence to equilibrium for the finite-dimensional system.

From an ergodic point of view, such a quasi-product structure is important because it gives a finite-dimensional quantitative formulation of the asymptotic independence underlying propagation of chaos: as $N$ grows, the dependence between individual particles becomes weaker and the system increasingly resembles a product system.

Motivated by this perspective, in the present work we seek analogous ergodic properties for mean-field systems in which the regularizing role played by expansion in \cite{Tanzi2022} is instead provided by fully supported additive noise. More specifically, we consider a class of $N$-dimensional random mean-field systems on the $N$-torus $\mathbb{T}^N$ of the form
\begin{align*}
x_i^{n+1}
=
f(x_i^n)
+
\frac{1}{N}\sum_{j=1}^N h(x_i^n,x_j^n)
+
\omega_i^n
\mod 1,
\qquad i=1,\ldots,N,
\end{align*}
where $f$ and $h$ are Lipschitz and the random variables $\omega_i^n$ are i.i.d., with common probability density $p$ on $[0,1]$ bounded away from zero.

Our first main result is Theorem \ref{thm1}, in which we show that, provided the lower bound on the noise density $p$ is sufficiently large relative to the Lipschitz constants of $f$ and $h$, the transfer operator associated with the full $N$-dimensional dynamics preserves a dimension-independent class of sub-Gaussian probability measures and is a one-step contraction, with contraction rate independent of $N$, on $\mathcal{M}_1(\mathbb{T}^N)$ endowed with the $N$-dimensional Dobrushin-Wasserstein distance. In particular, the $N$-dimensional system admits a unique sub-Gaussian invariant probability measure.

Under the same condition on the noise strength, Theorem \ref{thm2} establishes that the associated self-consistent transfer operator admits a unique fixed point $\rho$. Furthermore, it shows that the $N$-dimensional dynamics preserves an $O(N^{-1/2})$ neighbourhood of $\rho^{\otimes N}$ in the Dobrushin-Wasserstein metric. In particular, its unique invariant measure belongs to this neighbourhood.

The contraction estimate obtained in Theorem \ref{thm1} is also used in Theorem \ref{thm3}. There, we compare, in the Dobrushin-Wasserstein metric, the evolution of an exchangeable $N$-dimensional measure $\mu$ with the evolution of $N$ independent copies of the self-consistent transfer operator acting on the one-dimensional marginal of $\mu$. More precisely, we show that the effect of the initial dependence between the coordinates of $\mu$ decays exponentially fast, up to an $O(N^{-1/2})$ error. If, in addition, the noise density is of bounded variation, we obtain an analogous comparison in total variation for every fixed-dimensional marginal. Corollary \ref{cor1} then exploits these estimates to establish uniform-in-time propagation of chaos in both metrics.

To the best of our knowledge, this is the first work to exploit the regularizing effect of fully supported additive noise to construct an invariant class of sub-Gaussian probability measures whose concentration properties remain uniform in the dimension. In this sense, the sub-Gaussian class introduced here plays a role analogous to the quasi-product classes constructed in \cite{Tanzi2022} for uniformly expanding coupled maps, providing a quantitative notion of product-like structure for the full finite-dimensional system. In the present random setting, the noise yields in addition a dimension-uniform contraction of the full $N$-dimensional transfer operator. The choice of the Dobrushin-Wasserstein distance is essential for obtaining such a result: a direct Doeblin argument in total variation \cite{MeynTweedie2009} would lead to a contraction coefficient  which converges exponentially fast to $1$ as $N \to \infty.$

The dimension-independent contraction rate established in Theorem \ref{thm1} is the key ingredient in Theorem \ref{thm2}, where we prove that the invariant measure $\rho_N$ of the $N$-dimensional system is $O(N^{-1/2})$ close to the $N$-fold product $\rho^{\otimes N}$ of the equilibrium of the self-consistent evolution. Together with the sub-Gaussianity of $\rho_N$, this provides a quantitative finite-dimensional description of the system at equilibrium: for large $N$, its stationary state resembles that of $N$ independent particles distributed according to $\rho$. Theorem \ref{thm3} provides the finite-time counterpart of this equilibrium picture. It shows that, starting from an exchangeable distribution with arbitrary dependence among the particles, the dynamics forgets these dependencies exponentially fast, up to an $O(N^{-1/2})$ error. Together, these results provide a quantitative description of how the system behaves at each fixed dimension $N$, both throughout its evolution and at equilibrium, without having to pass to the limit $N\to\infty$.

The proofs of Theorems \ref{thm2} and \ref{thm3} follow a general scheme that also appears in the approaches of Del Moral and Rio \cite{DelMoralRio2011} and, more recently, Vuckovic \cite{Vuckovic2026}. In those works, the authors combine a local contraction property of the self-consistent dynamics with a certain conditional independence of the particles to compare the evolution of the system's state with the limiting nonlinear dynamics. By contrast, we combine the contraction of the full N-dimensional transfer operator with concentration estimates for the product measures generated by the self-consistent evolution to estimate the error between the two dynamics in the Dobrushin-Wasserstein metric.

With their method, \cite{DelMoralRio2011,Vuckovic2026} obtain uniform-in-time propagation of chaos at the $O(N^{-1/2})$ scale for product initial data, in a Wasserstein framework essentially equivalent to the Dobrushin-Wasserstein formulation considered here. Our approach recovers this estimate and extends it to arbitrary asymptotically independent exchangeable families by explicitly tracking the initial dependence. Furthermore, when the noise density is of bounded variation, we upgrade the result to total variation for every fixed-dimensional marginal (see Corollary \ref{cor1}).

This work is organized as follows. In Section \ref{sec2} we state our Hypothesis and our main results. In Section \ref{Sec3} we prove Theorem \ref{thm1}, in Section \ref{sec4} we prove Theorem \ref{thm2} and in Section \ref{sec5} we prove Theorem \ref{thm3} and Corollary \ref{cor1}.

\section{Hypothesis and main results}\label{sec2}
Let $\mathbb{T}$ denote the one-dimensional torus $\mathbb{R}/\mathbb{Z}$, endowed with the arc-length distance $d(\cdot,\cdot).$ For $N \in \mathbb{N}$, let $\mathbb{T}^N$ denote the $N$-torus. We consider $N$-dimensional Markov chains on $\mathbb{T}^N$ satisfying the following assumption.
\begin{suposicao}\label{A}
There exist a one-particle map  $f \in \operatorname{Lip} (\mathbb{T},\mathbb{R})$ and a  coupling function $h \in \operatorname{Lip}(\mathbb{T}^2,\mathbb{R}),$ such that the $N$-dimensional chain has the form
\begin{equation}\label{Markovchain}
x_i^{n+1} = f(x_i^n) +\frac{1}{N}\sum_{j=1}^N h(x^n_i,x^n_j)+\omega_i^n \mod1,  \qquad i = 1,\dots N,
\end{equation}
where, for all $n\ge 0$ and $i=1,\dots,N$, the random variables $\omega_i^n $ are i.i.d. and sampled from $[0,1]$ with a probability density $p,$ which is
extended periodically to $\mathbb R$ and satisfies
\begin{equation}\label{doeblin}
p(x) >c>0\qquad \forall x\in [0,1].
\end{equation}
Furthermore, we assume
\begin{equation}\label{kappa}
\kappa:= (1-c)\left(\operatorname{Lip}(f)+\operatorname{Lip}_1(h)+\operatorname{Lip}_2(h)\right)<\frac{\sqrt{3}}{2},
\end{equation}
where $\operatorname{Lip}(f)$ denotes the Lipschitz constant of $f$
$$
\operatorname{Lip}_1(h) :=\sup_{u_1\neq u'_1,u_2 \in \mathbb{T} } \frac{|h(u_1,u_2)-h(u_1',u_2)|}{d(u_1,u_1')},
$$
and $\operatorname{Lip}_2(h)$ is defined analogously.
\end{suposicao}
Given a Markov chain satisfying \eqref{Markovchain}, its associated transfer operator reads
\begin{equation}\label{transferop}
\mathcal{P}_N(\mu)(dx) = \int \prod_{j=1}^N p\left(x_j-f(u_j)-\frac{1}{N}\sum_{k=1}^Nh(u_j,u_k)\right)dx_jd\mu(u), \qquad \forall \mu \in \mathcal{M}_1(\mathbb T^N),
\end{equation}
where $\mathcal{M}_1(\mathbb T^N)$ denotes the space of probability measures on $\mathbb{T}^N$. 
In this work, we endow $\mathcal{M}_1(\mathbb{T}^N)$ with the $N$--dimensional Dobrushin-Wasserstein distance \begin{equation}\label{wasserstainspace}
W^1_N(\mu_1,\mu_2) = \sup_{g \colon \mathbb{T}^N \to \mathbb{R} \colon \operatorname{Lip}^N(g) \le 1}\int g(x) (\mu_1-\mu_2)(dx) \qquad \forall \mu_1,\mu_2 \in \mathcal{M}_1(\mathbb{T}^N),
\end{equation}    
where 
\begin{equation}\label{lipN}
\operatorname{Lip}^N(g):=\sum_{i=1}^N \operatorname{Lip}_i(g), 
\end{equation}
and, for any observable $g \colon \mathbb{T}^N \to \mathbb{R}$
\begin{equation}\label{lipi}
\operatorname{Lip}_i(g):= \sup_{x_{-i} \in \mathbb T^{N-1}} \sup_{x_i\neq x'_i}\frac{|g(x_i,x_{-i})-g(x'_i,x_{-i})|}{d(x_i,x'_i)}.
\end{equation}
Here, $x_{-i}$ denotes the $(N-1)$-dimensional vector obtained by removing the $i$-th coordinate
\begin{equation}\label{xminus}
x_{-i} = (x_v)_{v \in \{1,\dots, N\}\setminus \{i\}}
\end{equation}

Given a probability measure $\mu \in \mathcal{M}_1(\mathbb{T}^N),$ and an observable $g \colon \mathbb{T}^N \to \mathbb{R}$, we denote by $\mathbb{E}_{\mu}[g]$, the expectation of $g$ with respect to $\mu$.

\begin{definicao}\label{Subgaussian}
Let $\mu$ be a probability measure on $\mathbb{T}^N$. We say that $\mu$ is sub-Gaussian if, for any observable  $g \colon \mathbb{T}^N \to \mathbb{R}$
\begin{align*}
\mathbb{E}_{\mu}\left[e^{\lambda (g-\mathbb{E}_\mu[g])}\right] \le \exp\left(\frac{\lambda^2}{2}\sum_{i=1}^N \operatorname{Lip}_i(g)^2\right), \qquad \forall \lambda \in \mathbb{R}.
\end{align*}
\end{definicao}

It follows by Chernoff's bound \cite[Sections 2.2-2.3]{boucheron2013concentration} that if $\mu$ is sub-Gaussian, then it satisfies a concentration inequality of the form
\begin{equation}\label{chernoffbound}
\mu \left\{|g-\mathbb{E}_\mu[g] | > t\right\} \le 2 \exp\left(-\frac{t^2}{2\sum_{i=1}^N \operatorname{Lip}_i(g)^2} \right).
\end{equation}

\begin{teorema}\label{thm1}
For $N\ge 2$, let $\mathcal{P}_N$ be the transfer operator associated to a Markov chain satisfying Hypothesis \ref{A}. Then, if $\mu \in \mathcal{M}_1(\mathbb{T}^N)$ is sub-Gaussian, so is $\mathcal{P}_N(\mu).$ Furthermore, let $\kappa$ be as in \eqref{kappa}. Then, for all $\mu_1, \mu_2 \in \mathcal{M}_1(\mathbb{T}^N)$
\begin{equation}\label{tensorizedcontracton}
W^1_N(\mathcal{P}_N(\mu_1),\mathcal{P}_N(\mu_2))\le \kappa W^1_N(\mu_1,\mu_2).
\end{equation}
In particular, $\mathcal{P}_N$ admits a unique fixed point $\rho_N.$ Furthermore, $\rho_N$ is sub-Gaussian.
\end{teorema}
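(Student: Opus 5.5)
The plan is to work with the dual operator. Write $T\colon\mathbb T^N\to\mathbb T^N$ for the deterministic part, $T(u)_j=f(u_j)+\frac1N\sum_k h(u_j,u_k)$, so that for an observable $g$
\begin{equation*}
\int g\,d\mathcal P_N(\mu)=\int (G_g\circ T)\,d\mu,\qquad G_g(y):=\int g(y+\omega)\prod_{j=1}^N p(\omega_j)\,d\omega .
\end{equation*}
Everything then reduces to two coordinatewise estimates, one for $G_g$ and one for composition with $T$.

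\textbf{Step 1 (smoothing by noise).} I will show $\operatorname{Lip}_j(G_g)\le(1-c)\operatorname{Lip}_j(g)$ for every $j$. Fix all coordinates except the $j$-th and write $p=c+(p-c)$, where $p-c\ge0$ has total mass $1-c$. After the change of variables $s=y_j+\omega_j$, the $c$-part becomes $c\int_{\mathbb T} g\,ds$, which does not depend on $y_j$. The $(p-c)$-part is an average of translates of $g$ against a sub-probability measure of mass $1-c$. Hence moving $y_j$ to $y_j'$ changes it by at most $(1-c)\operatorname{Lip}_j(g)\,d(y_j,y_j')$. Periodicity of $p$ makes this well defined on the torus.

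\textbf{Step 2 (composition with $T$).} The coordinate $u_i$ enters $T_i$ with constant at most $\operatorname{Lip}(f)+\operatorname{Lip}_1(h)$ (the diagonal term $h(u_i,u_i)$ contributes at most $(\operatorname{Lip}_1(h)+\operatorname{Lip}_2(h))/N$). It enters each $T_j$, $j\ne i$, with constant $\operatorname{Lip}_2(h)/N$. Therefore
\begin{equation*}
\operatorname{Lip}_i(G\circ T)\le \big(\operatorname{Lip}(f)+\operatorname{Lip}_1(h)\big)\operatorname{Lip}_i(G)+\frac{\operatorname{Lip}_2(h)}{N}\sum_{j}\operatorname{Lip}_j(G).
\end{equation*}
Summing over $i$ and using Step 1 gives $\operatorname{Lip}^N(G_g\circ T)\le\kappa\operatorname{Lip}^N(g)$.

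\textbf{Contraction and fixed point.} By the dual formula, the bound $\operatorname{Lip}^N(G_g\circ T)\le\kappa\operatorname{Lip}^N(g)$ gives \eqref{tensorizedcontracton} directly. The space $(\mathcal M_1(\mathbb T^N),W^1_N)$ is complete, since $W^1_N$ is equivalent to the usual $W^1$ for fixed $N$. Since $\kappa<1$, Banach's fixed point theorem yields the unique fixed point $\rho_N$.

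\textbf{Sub-Gaussian preservation.} Let $x=T(u)+\omega$ and split
\begin{equation*}
g(x)-\mathbb E_{\mathcal P_N\mu}[g]=\big[g(T(u)+\omega)-G_g(T(u))\big]+\big[G_g(T(u))-\mathbb E_\mu[G_g\circ T]\big].
\end{equation*}
First term: conditionally on $u$, the $\omega_j$ are independent. The coordinate $\omega_j$ can change $g$ by at most $\operatorname{Lip}_j(g)\cdot\frac12$, because the torus has diameter $1/2$. Hoeffding/McDiarmid via the Doob martingale then bounds the conditional MGF by $\exp\big(\frac{\lambda^2}{2}\cdot\frac1{16}\sum_j\operatorname{Lip}_j(g)^2\big)$. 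Even a cruder constant $\frac14$ in place of $\frac1{16}$ suffices below.

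Second term: apply the sub-Gaussianity of $\mu$ to $G_g\circ T$, which requires an $\ell^2$ version of Step 2. By Minkowski, the diagonal part contributes at most $(1-c)(\operatorname{Lip}(f)+\operatorname{Lip}_1(h))\|\operatorname{Lip}_\cdot(g)\|_2$. The mean-field part is the constant vector $\frac{\operatorname{Lip}_2(h)}N\sum_j\operatorname{Lip}_j(G)$, whose $\ell^2$ norm is at most $\operatorname{Lip}_2(h)(1-c)\|\operatorname{Lip}_\cdot(g)\|_2$ by Cauchy--Schwarz. This gives $\sum_i\operatorname{Lip}_i(G_g\circ T)^2\le\kappa^2\sum_i\operatorname{Lip}_i(g)^2$.

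Combining the two terms by the tower property, the total exponent is $\frac{\lambda^2}{2}(\kappa^2+\frac14)\sum_i\operatorname{Lip}_i(g)^2$. This is at most $\frac{\lambda^2}{2}\sum_i\operatorname{Lip}_i(g)^2$ exactly when $\kappa\le\sqrt3/2$, which matches \eqref{kappa}. I expect this step to be the delicate one. It requires getting the $\ell^2$ (rather than $\ell^1$) control of the mean-field term, and keeping the noise constant small enough that it fits under the threshold $\sqrt3/2$.

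\textbf{Sub-Gaussianity of $\rho_N$.} A Dirac mass is trivially sub-Gaussian. Its iterates under $\mathcal P_N$ stay sub-Gaussian and converge to $\rho_N$ in $W^1_N$, hence weakly. For Lipschitz, hence continuous, $g$ both sides of the defining inequality pass to the weak limit. I will check that the definition is only meaningful for Lipschitz $g$ and restrict to those, noting that otherwise the right-hand side is infinite.
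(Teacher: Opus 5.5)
Your proposal is correct and follows the paper's route. Your factorization $F_g=G_g\circ T$, with the coordinatewise bound $\operatorname{Lip}_j(G_g)\le(1-c)\operatorname{Lip}_j(g)$, reproduces both the $\ell^1$ and $\ell^2$ contraction by $\kappa$ of Lemma \ref{keylema}, and your sub-Gaussian step (tower property, bounded differences for the noise, sub-Gaussianity of $\mu$ applied to $F_g$) is exactly the paper's argument, while your extra checks (completeness of $W^1_N$, a Dirac starting measure, passing the inequality to the weak limit for Lipschitz $g$) supply details the paper leaves implicit.
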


When $N \to \infty,$ the limiting dynamics governing the state of the Markov chain in \eqref{Markovchain}  is described by the so-called  \emph{self-consistent transfer operator}
\begin{equation}\label{STO}
\mathcal{P}(\nu)(dx) = \int  p\left(x-f(u)-\int h(u,v)d\nu(v)\right)dxd\nu(u), \qquad \forall \nu \in \mathcal{M}_1(\mathbb T).
\end{equation}

Let $q \in \mathcal{M}_1(\mathbb{T})$ be a one-dimensional probability measure, and, for $C>0$ and $N\in \mathbb{N}$, let $B_{C,N}(q)$ denote  the ball of size $CN^{-\frac{1}{2}}$ in the Dobrushin-Wasserstein distance around the product measure $q^{\otimes N}:$
\begin{equation}\label{BCN}
B_{C,N}(q):=\left\{\mu \in \mathcal{M}_1(\mathbb{T}^N) \colon W^1_N(\mu,q^{\otimes N})\le CN^{-\frac{1}{2}}\right\}.
\end{equation}

\begin{teorema}\label{thm2}
Let $\mathcal{P}_N$ be the transfer operator associated to a Markov chain satisfying Hypothesis \ref{A}, with one-particle map $f \in \operatorname{Lip}(\mathbb{T}), h \in \operatorname{Lip}(\mathbb{T}^2),$ density $p$ satisfying \eqref{doeblin} and $\kappa$ as in \eqref{kappa}. Then, the following statements hold:
\begin{enumerate}
\item \label{one} the associated self-consistent transfer operator $\mathcal{P}$ in \eqref{STO} is a one-step contraction in $W^1_1$. In particular, for all $\mu_1,\mu_2 \in \mathcal{M}_1(\mathbb{T})$
\begin{equation}\label{stocontr}
W^1_1(\mathcal{P}(\mu_1),\mathcal{P}(\mu_2))\le \kappa W^1_1(\mu_1,\mu_2).
\end{equation}
In particular $\mathcal{P}$ admits a unique fixed point $\rho$.
\item \label{two} Let 
\begin{equation}\label{barC}
\bar C:= \frac{(1-c)\operatorname{Lip}_2(h)}{1-\kappa}.
\end{equation}
Then,  for all $N\ge 2$
\begin{equation}\label{firstinclusion}
\mathcal{P}_N B_{\bar  C,N}(\rho)\subset B_{\bar  C,N}(\rho),
\end{equation}
In particular, for all $N \ge 2$, the unique  invariant sub-Gaussian   measure $\rho_N$  of  $\mathcal{P}_N$ belongs to $B_{\bar C,N}(\rho)$ and satisfies
\begin{equation}\label{allI}
W^1_N(\rho_N,\rho^{\otimes N})
\le \bar C N^{-\frac12}.
\end{equation}
\end{enumerate}
\end{teorema}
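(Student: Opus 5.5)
Part (1) works like the one-dimensional analogue of the contraction in Theorem \ref{thm1}. Fix a $1$-Lipschitz observable $g$ on $\mathbb T$ and write
\[
\int g\, d\mathcal P(\mu_\ell)=\int \tilde g\bigl(f(u)+\textstyle\int h(u,v)d\mu_\ell(v)\bigr)\,d\mu_\ell(u),\qquad \tilde g(y):=\int g(y+\omega)p(\omega)d\omega .
\]
The key one-dimensional fact, which presumably also underlies Theorem \ref{thm1}, is that $\operatorname{Lip}(\tilde g)\le 1-c$. To see this, write $p=c+(1-c)\hat p$ with $\hat p$ a probability density. Convolution with the constant $c$ produces a constant, and convolution with $\hat p$ preserves the Lipschitz constant.

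Then split the difference $\int g\,d\mathcal P(\mu_1)-\int g\,d\mathcal P(\mu_2)$ into two terms. The first is $\int [\tilde g(f(u)+H_{\mu_1}(u))-\tilde g(f(u)+H_{\mu_2}(u))]d\mu_1(u)$, with $H_\mu(u)=\int h(u,v)d\mu(v)$. It is bounded by $(1-c)\sup_u|H_{\mu_1}(u)-H_{\mu_2}(u)|\le(1-c)\operatorname{Lip}_2(h)W^1_1(\mu_1,\mu_2)$. The second is $\int \tilde g(f(u)+H_{\mu_2}(u))\,d(\mu_1-\mu_2)(u)$. Its integrand has Lipschitz constant at most $(1-c)(\operatorname{Lip}(f)+\operatorname{Lip}_1(h))$. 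Together these give \eqref{stocontr} with constant $\kappa<1$, and Banach's fixed point theorem on the complete space $(\mathcal M_1(\mathbb T),W^1_1)$ yields $\rho$.

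For part (2), fix $\mu\in B_{\bar C,N}(\rho)$. Since $\mathcal P(\rho)=\rho$, the triangle inequality and Theorem \ref{thm1} give
\[
W^1_N(\mathcal P_N\mu,\rho^{\otimes N})\le \kappa W^1_N(\mu,\rho^{\otimes N})+W^1_N\bigl(\mathcal P_N(\rho^{\otimes N}),\mathcal P(\rho)^{\otimes N}\bigr).
\]
The first term is at most $\kappa\bar C N^{-1/2}$. It therefore suffices to prove the one-step consistency estimate
\[
W^1_N\bigl(\mathcal P_N(\rho^{\otimes N}),\rho^{\otimes N}\bigr)\le (1-c)\operatorname{Lip}_2(h)N^{-1/2},
\]
because $\kappa\bar C+(1-c)\operatorname{Lip}_2(h)=\bar C$ by \eqref{barC}. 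Once this holds, invariance of $\rho_N$ and uniqueness from Theorem \ref{thm1} give $\rho_N=\lim\mathcal P_N^n\rho^{\otimes N}\in B_{\bar C,N}(\rho)$, since the ball is closed and invariant. Alternatively, one can apply the displayed inequality directly to $\mu=\rho_N$ and solve for the distance.

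The consistency estimate is the main step. Take $g$ with $\operatorname{Lip}^N(g)\le1$. Both measures are obtained by applying independent noise to images of $u\sim\rho^{\otimes N}$: the first uses drifts $f(u_j)+\frac1N\sum_k h(u_j,u_k)$, the second uses $f(u_j)+H_\rho(u_j)$. Couple them with the same $u$ and the same noise, then replace the drift coordinate by coordinate. The integration in the noise variables makes each coordinate-wise Lipschitz constant of $\tilde g(y):=\mathbb E[g(y+\omega)]$ at most $(1-c)\operatorname{Lip}_j(g)$; this is the $N$-dimensional version of the smoothing fact above. The difference is therefore at most
\[
(1-c)\sum_j \operatorname{Lip}_j(g)\,\mathbb E_{\rho^{\otimes N}}\Bigl|\tfrac1N\textstyle\sum_k h(u_j,u_k)-H_\rho(u_j)\Bigr|.
\]
Conditional on $u_j$, the inner sum is an average of $N-1$ i.i.d. centred terms $h(u_j,u_k)-H_\rho(u_j)$, each of size at most $\operatorname{Lip}_2(h)/2$ since $\operatorname{diam}\mathbb T=1/2$. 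The diagonal term $k=j$ contributes an error of size at most $\operatorname{Lip}_2(h)/(2N)$. By Cauchy--Schwarz, the expectation is at most roughly $\frac{\operatorname{Lip}_2(h)}{2}\bigl(\sqrt{N-1}+1\bigr)/N\le \operatorname{Lip}_2(h)N^{-1/2}$ for $N\ge2$. Alternatively, the sub-Gaussian bound \eqref{chernoffbound} for the product measure gives the same order. Summing with $\sum_j\operatorname{Lip}_j(g)\le1$ yields the claim.

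The obstacle is bookkeeping the constant so that it comes out exactly as $(1-c)\operatorname{Lip}_2(h)$, in particular the diagonal term. Bounding the variance of $h(u_j,\cdot)$ by $\operatorname{Lip}_2(h)^2/4$ via $\operatorname{diam}\mathbb T=1/2$ leaves enough slack to absorb it.
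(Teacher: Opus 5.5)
Your proposal is correct and follows essentially the same route as the paper. For item (1) you use the same two-term splitting, with the smoothing factor $1-c$ from $p=c+(1-c)q$ placed on the observable side instead of stated as $W^1_1(p(\cdot-a)dx,p(\cdot-b)dx)\le(1-c)d(a,b)$. For item (2) you use the same triangle inequality with the contraction of Theorem~\ref{thm1}, plus a coordinate-by-coordinate replacement of the drifts giving $W^1_N(\mathcal P_N(\rho^{\otimes N}),\rho^{\otimes N})\le(1-c)\operatorname{Lip}_2(h)N^{-1/2}$. The one difference is that you bound the mean-field fluctuation by a conditional variance (Cauchy--Schwarz) estimate and treat the diagonal term $k=j$ explicitly, where the paper just cites a bounded-differences concentration inequality. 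Your bound $\frac{\operatorname{Lip}_2(h)}{2}\cdot\frac{\sqrt{N-1}+1}{N}\le\operatorname{Lip}_2(h)N^{-1/2}$ is valid and makes that constant bookkeeping more transparent.
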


We now recall the notion of an exchangeable probability measure.

\begin{definicao}\label{exchangeable}
We say that a probability measure $\mu \in \mathcal{M}^1(\mathbb{T}^N)$ is \emph{exchangeable} if, for every permutation $\sigma$ of $\{1,\dots,N\}$ and any bounded measurable $g \colon \mathbb{T}^N \to \mathbb{R}$, we have
\[
\int_{\mathbb{T}^N} g(x_1,\dots,x_N)\,d\mu
=
\int_{\mathbb{T}^N} g(x_{\sigma(1)},\dots,x_{\sigma(N)})\,d\mu.
\]
\end{definicao}

An immediate consequence of exchangeability is that the $k$-dimensional marginals of an $N$-dimensional exchangeable measure depend only on $k$, and not on the particular choice of coordinates onto which one projects. Moreover, since $\mathcal{P}_N$ commutes with coordinate permutations, it preserves exchangeability.

In our final main result, we establish a quantitative loss of dependence for exchangeable measures in the $W_N^1$ metric under Hypothesis \ref{A}. If, in addition, the noise density belongs to $\operatorname{BV}(\mathbb T)$, we obtain an analogous estimate in total variation. As a consequence, we deduce uniform-in-time propagation of chaos.

\begin{teorema}\label{thm3}
For $N \ge 2,$ let $\mu \in \mathcal{M}_1(\mathbb{T}^N)$ be exchangeable. Then, under the assumptions of Theorems  \ref{thm1} and \ref{thm2}, there exists $\bar C_0>0$, independent of $N,$ such that, for all $n\ge0$  
\begin{equation}\label{firstlasota}
W_N^1\left(
\mathcal P_N^n(\mu),
\left(\mathcal P^n(\pi_1\mu)\right)^{\otimes N}\right) \le \kappa^n W^1_N\left(\mu,\left(\pi_1\mu\right)^{\otimes N}\right) + \bar C_0N^{-\frac{1}{2}},
\end{equation}
where $\kappa$ is as in \eqref{kappa}. If we further assume that   $p \in \operatorname{BV}(\mathbb T)$, we  have, for all $n\ge 1$ 
\begin{equation}\label{secondlasota}
\|\pi_{\{1,\dots,k\}}\mathcal{P}_N^n(\mu)-\left(\mathcal P^n(\pi_1\mu)\right)^{\otimes k}\|_{TV} \le \bar{C}_{0,k}  \kappa^{n-1} W^1_N\left(\mu,\left(\pi_1\mu\right)^{\otimes N}\right)+ \bar{C}_{1,k}N^{-\frac{1}{2}},\qquad \forall 1\le k\le N.
\end{equation}
for some $\bar C_{0,k},\bar C_{1,k} >0.$
\end{teorema}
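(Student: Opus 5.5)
Set $\nu_n:=\mathcal P^n(\pi_1\mu)$ and $e_n:=W^1_N(\mathcal P_N^n(\mu),\nu_n^{\otimes N})$. The plan is a one-step recursion for $e_n$ via the triangle inequality:
\[
e_{n+1}\le W^1_N\bigl(\mathcal P_N(\mathcal P_N^n\mu),\mathcal P_N(\nu_n^{\otimes N})\bigr)+W^1_N\bigl(\mathcal P_N(\nu_n^{\otimes N}),\nu_{n+1}^{\otimes N}\bigr)\le \kappa e_n+\delta_N ,
\]
where the first term is handled by Theorem \ref{thm1}. Here $\delta_N$ is a uniform bound on the \emph{one-step consistency error} $W^1_N(\mathcal P_N(q^{\otimes N}),(\mathcal P q)^{\otimes N})$ over all $q\in\mathcal M_1(\mathbb T)$. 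Iterating gives $e_n\le\kappa^n e_0+\delta_N/(1-\kappa)$, which is \eqref{firstlasota} with $\bar C_0=\delta_N\sqrt N/(1-\kappa)$. Exchangeability only enters in identifying $e_0$ with the initial term of \eqref{firstlasota}.

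For the consistency error, I would couple the two chains using the same noise. Draw $u\sim q^{\otimes N}$ and i.i.d.\ $\omega_i$, and set
\[
x_i=f(u_i)+\tfrac1N\textstyle\sum_k h(u_i,u_k)+\omega_i,\qquad y_i=f(u_i)+\int h(u_i,v)\,dq(v)+\omega_i .
\]
Then $y\sim(\mathcal Pq)^{\otimes N}$, so
\[
W^1_N\le \sum_i \mathbb E\, d(x_i,y_i)\le \sum_i \mathbb E\Bigl|\tfrac1N\textstyle\sum_k h(u_i,u_k)-\int h(u_i,v)\,dq(v)\Bigr| .
\]
Conditionally on $u_i$, this is an average of independent centered bounded terms, plus an $O(1/N)$ diagonal term. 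Its $L^2$ norm is therefore $O(\operatorname{Lip}_2(h)N^{-1/2})$, since the oscillation of $h(u_i,\cdot)$ is at most $\operatorname{Lip}_2(h)/2$. Summing over $i$ gives $O(N^{1/2})$, which is a factor $N$ too large for an $O(N^{-1/2})$ bound in the $\sum_i\operatorname{Lip}_i$ normalisation. So the crude coupling is not enough. This is also why Theorem \ref{thm2} produces $\bar C$ with the extra factor $(1-c)$: there the error must be measured dually. I would test against $g$ with $\sum_i\operatorname{Lip}_i(g)\le1$. Replacing the drift term by term, each coordinate contributes $\operatorname{Lip}_i(g)$ times the expected error in coordinate $i$, and the noise convolution contributes the factor $(1-c)$. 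This gives a total of
\[
(1-c)\sum_i\operatorname{Lip}_i(g)\,O(\operatorname{Lip}_2(h)N^{-1/2})\le (1-c)\operatorname{Lip}_2(h)N^{-1/2}.
\]
The key point is that the per-coordinate error is weighted by $\operatorname{Lip}_i(g)$ rather than summed. This is the main obstacle: I expect to reuse the argument from the proof of \eqref{firstinclusion}, where this consistency estimate (with constant $(1-c)\operatorname{Lip}_2(h)$) presumably already appears, applied to arbitrary $q$. The constant is uniform in $q$, so it also applies at each $\nu_n$.

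For \eqref{secondlasota}, write $\mathcal P_N^n\mu=\mathcal P_N(\mathcal P_N^{n-1}\mu)$ and compare with $\mathcal P_N(\nu_{n-1}^{\otimes N})$. The $k$-marginal of $\mathcal P_N\eta$ has density
\[
\int\prod_{j\le k}p\bigl(x_j-f(u_j)-\tfrac1N\textstyle\sum_l h(u_j,u_l)\bigr)\,d\eta(u).
\]
Since $p\in\operatorname{BV}$, translating $p$ by $s$ changes it in $L^1$ by at most $\operatorname{Var}(p)|s|$. The integrand, viewed as a map into $L^1(\mathbb T^k)$, is therefore Lipschitz in $u$: $\operatorname{Lip}_j\le \operatorname{Var}(p)(\operatorname{Lip}f+\operatorname{Lip}_1h+\operatorname{Lip}_2h)$ for $j\le k$, and $\operatorname{Lip}_l\le k\operatorname{Var}(p)\operatorname{Lip}_2(h)/N$ for $l>k$. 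Its $\operatorname{Lip}^N$ is thus bounded by a constant $C_k$, and duality against TV test functions gives
\[
\|\pi_{\{1..k\}}\mathcal P_N\eta_1-\pi_{\{1..k\}}\mathcal P_N\eta_2\|_{TV}\le C_kW^1_N(\eta_1,\eta_2).
\]
Apply this with $\eta_1=\mathcal P_N^{n-1}\mu$ and $\eta_2=\nu_{n-1}^{\otimes N}$, using \eqref{firstlasota} at time $n-1$. It remains to bound $\|\pi_{\{1..k\}}\mathcal P_N(\nu^{\otimes N})-(\mathcal P\nu)^{\otimes k}\|_{TV}$. Use the same BV translation bound in each of the $k$ coordinates, together with the $L^1$ law-of-large-numbers estimate $\mathbb E\bigl|\frac1N\sum_l h(u_j,u_l)-\int h(u_j,\cdot)\,d\nu\bigr|=O(N^{-1/2})$. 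This gives $O(k\operatorname{Var}(p)\operatorname{Lip}_2(h)N^{-1/2})$. Combining the two bounds yields \eqref{secondlasota}.
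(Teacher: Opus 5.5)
Your proof is correct and follows the paper's route. For \eqref{firstlasota} you use the recursion $e_{n+1}\le\kappa e_n+(1-c)\operatorname{Lip}_2(h)N^{-1/2}$, built from Theorem \ref{thm1} and the telescoping consistency estimate behind \eqref{firstw1} with $\mathcal P^{n}(\pi_1\mu)$ in place of $\rho$; for \eqref{secondlasota} you bound the $\operatorname{Lip}^N$-constant of $u$ to $k$-marginal density in $L^1$ via the BV translation estimate, then combine \eqref{firstlasota} at time $n-1$ with a law-of-large-numbers consistency term.

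One correction to your detour: the synchronous coupling is not insufficient. Under the $\operatorname{Lip}^N$ normalisation any coupling gives $\int g\,d(\mu_1-\mu_2)\le\sum_i\operatorname{Lip}_i(g)\,\mathbb E\, d(x_i,y_i)\le\max_i\mathbb E\, d(x_i,y_i)$, so it already yields an $\operatorname{Lip}_2(h)N^{-1/2}$ bound. The extra factor $(1-c)$ comes from the splitting $p=c+(1-c)q$, not from working dually.
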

A corollary of Theorem \ref{thm3} is the following uniform-in-time propagation of chaos result.
\begin{corolario}\label{cor1} 
Assume that $\{\mu_N\}_{N\ge 2}$ is a family of exchangeable measures with $\mu_N \in \mathcal{M}_1(\mathbb T^N)$ satisfying, for every fixed $k\in\mathbb N$,
\begin{equation}\label{kcong}
W_k^1\left(
\pi_{\{1,\ldots,k\}}\mu_N,
(\pi_1\mu_N)^{\otimes k}
\right)
\longrightarrow 0,
\qquad\text{as }N\to\infty.
\end{equation}
Then,  under the assumptions of Theorems  \ref{thm1} and \ref{thm2}, 
\begin{equation}\label{propagationw1n}
\sup_{n\geq 0}
W_N^1\left(
\mathcal P_N^n(\mu_N),
\left(\mathcal P^n(\pi_1\mu_N)\right)^{\otimes N}
\right) \to 0,\qquad N\to \infty.
\end{equation}
If we further assume that the density $p$ in \eqref{doeblin} is a function of bounded variation, then, for every fixed $k\in\mathbb{N}$,
\begin{equation}\label{whatsithis}
\sup_{n\ge 1}
\left\|
\pi_{\{1,\dots,k\}}P_N^n(\mu_N)
-
\bigl(P^n(\pi_1\mu_N)\bigr)^{\otimes k}
\right\|_{\mathrm{TV}}
\longrightarrow 0,
\qquad N\to\infty.
\end{equation}
\end{corolario}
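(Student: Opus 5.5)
The plan is to reduce the corollary to Theorem \ref{thm3}. The only missing ingredient is
\[
\delta_N:=W^1_N\bigl(\mu_N,(\pi_1\mu_N)^{\otimes N}\bigr)\longrightarrow 0 .
\]
Once this holds, \eqref{firstlasota} gives, since $\kappa<1$,
\[
\sup_{n\ge0}W_N^1\bigl(\mathcal P_N^n(\mu_N),(\mathcal P^n(\pi_1\mu_N))^{\otimes N}\bigr)\le \delta_N+\bar C_0N^{-1/2}\to0 .
\]
Likewise \eqref{secondlasota} gives the bound $\bar C_{0,k}\delta_N+\bar C_{1,k}N^{-1/2}$ for the total-variation statement. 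Here I use that $\bar C_0,\bar C_{0,k},\bar C_{1,k}$ do not depend on $\mu$, so the fact that $q_N:=\pi_1\mu_N$ varies with $N$ is harmless; I would check this in the proof of Theorem \ref{thm3}. The real work is to derive $\delta_N\to0$ from the fixed-$k$ hypothesis \eqref{kcong}, which I only need for $k=2$.

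\textbf{Step 1 (symmetrization).} Let $g$ satisfy $\operatorname{Lip}^N(g)\le1$. Both $\mu_N$ and $q_N^{\otimes N}$ are exchangeable, so $\int g\,d(\mu_N-q_N^{\otimes N})$ does not change if $g$ is replaced by its permutation average $\bar g=\frac1{N!}\sum_\sigma g\circ\sigma$. Since $\operatorname{Lip}_i(\bar g)\le \frac1N\sum_j\operatorname{Lip}_j(g)\le \frac1N$, the function $\bar g$ is symmetric and satisfies
\[
|\bar g(x)-\bar g(y)|\le \frac1N\sum_i d(x_i,y_{\tau(i)})
\]
for every permutation $\tau$. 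Hence $\bar g(x)=G(L_N(x))$, where $L_N(x)=\frac1N\sum_i\delta_{x_i}$ is the empirical measure and $G$ is $1$-Lipschitz for $W^1_1$ on $\mathcal M_1(\mathbb T)$. Here I use that, for empirical measures with equal weights, the optimal $W^1_1$ transport cost is attained at a matching. It follows that
\[
\delta_N\le \mathbb E_{\mu_N}\bigl[W_1^1(L_N,q_N)\bigr]+\mathbb E_{q_N^{\otimes N}}\bigl[W_1^1(L_N,q_N)\bigr].
\]

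\textbf{Step 2 (i.i.d.\ term).} The second term is the mean empirical-measure error for i.i.d.\ samples on the one-dimensional torus. It is $O(N^{-1/2})$ uniformly in $q_N$, for instance via the CDF representation of $W^1_1$ on $\mathbb T$ and the estimate $\mathbb E|F_N(t)-F(t)|\le (4N)^{-1/2}$.

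\textbf{Step 3 (the main obstacle).} I must show $\mathbb E_{\mu_N}W^1_1(L_N,q_N)\to0$ from \eqref{kcong} with $k=2$; this is the analogue of Sznitman's equivalence between chaoticity and convergence of empirical measures. I would work with Fourier modes $e_m(x)=e^{2\pi i m x}$. Expanding by exchangeability,
\[
\mathbb E_{\mu_N}\Bigl|\int e_m\,d(L_N-q_N)\Bigr|^2=\frac{1-|\hat q_N(m)|^2}{N}+\frac{N-1}{N}\int (e_m-\hat q_N(m))\otimes\overline{(e_m-\hat q_N(m))}\,d\bigl(\pi_{\{1,2\}}\mu_N-q_N^{\otimes2}\bigr).
\]
The integrand has $\operatorname{Lip}_1+\operatorname{Lip}_2\le C|m|$, so the last term is at most $C|m|\,W^1_2(\pi_{\{1,2\}}\mu_N,q_N^{\otimes2})\to0$ for each fixed $m$. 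To pass to $W^1_1$, I convolve with a Fejér kernel $K_M$. For any $\nu,\nu'$,
\[
W^1_1(\nu,\nu')\le 2\,\varepsilon_M+\sum_{0<|m|\le M}|\hat\phi(m)|\,|\hat\nu(m)-\hat\nu'(m)|,
\]
where $\varepsilon_M:=\sup\{\|\phi-K_M*\phi\|_\infty : \operatorname{Lip}(\phi)\le1\}\to0$ and $|\hat\phi(m)|\le C/|m|$ for $1$-Lipschitz $\phi$. Taking expectations and using Cauchy--Schwarz mode by mode, I would first choose $M$ large and then $N$ large. This yields $\delta_N\to0$, and the corollary follows from the first paragraph.
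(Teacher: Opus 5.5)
Your proposal is correct, and its overall structure matches the paper's. Everything is reduced to $W_N^1\bigl(\mu_N,(\pi_1\mu_N)^{\otimes N}\bigr)\to0$, after which \eqref{firstlasota} and \eqref{secondlasota} finish the argument. The constants in those estimates are indeed independent of $\mu$, as the proof of Theorem \ref{thm3} shows.

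The difference lies in the one step the paper does not carry out. The paper invokes the characterization of chaos through convergence of empirical measures \cite[Proposition 2.2]{Sznitman1991}, together with ``symmetrizing the test observables'', in a single sentence. You make the symmetrization explicit: the permutation average of $g$ is $\frac1N$-Lipschitz in each coordinate, hence a $W^1_1$-Lipschitz function of the empirical measure. You also replace the citation by a self-contained Fourier/Fej\'er argument that uses only the two-particle marginal.

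Your route has two advantages.
\begin{itemize}
\item It needs \eqref{kcong} only for $k=2$.
\item It handles the $N$-dependent marginal $q_N=\pi_1\mu_N$ directly, since your bounds are uniform in $q_N$.
\end{itemize}
Sznitman's proposition is stated for a fixed limit law. To use it here one needs an extra compactness step that the paper leaves implicit: pass to subsequences along which $q_N\to q$ in the compact space $\mathcal M_1(\mathbb T)$, apply the proposition with limit $q$, and then use $W^1_1(q_N,q)\to0$. The paper's route is shorter but relies on that unstated subsequence argument and on the full-strength citation.

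One small point in your Step 1. The $1$-Lipschitz function $G$ is a priori defined only on $N$-point empirical measures, so $G(q_N)$ needs either a McShane extension or can be avoided altogether. For the latter, take $X\sim\mu_N$ and $Y\sim q_N^{\otimes N}$ independent and use $|\bar g(X)-\bar g(Y)|\le W^1_1(L_N(X),q_N)+W^1_1(q_N,L_N(Y))$; this gives your bound on $\delta_N$ directly.
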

\section{Proof of Theorem \ref{thm1}}\label{Sec3}
Let $N \ge 2$. For $u \in \mathbb{T}^N$, define the kernels 
\begin{equation}\label{nuj}
\nu_{j,u}(dx_j)=p\left(x_j-f(u_j)-\frac{1}{N} \sum_{k=1}^N h(u_j,u_k)\right)dx_j,
\end{equation}
and the product measure
\begin{equation}\label{nuu}
\nu_u(dx) = \prod_{j= 1}^N \nu_{j,u}(dx_j), 
\end{equation}   
All the assertions of Theorem \ref{thm1} are a corollary of the following Lemma.
\begin{lema}\label{keylema}
For $N \ge 2,$ let us consider an $N$-dimensional Markov chain satisfying Hypothesis \ref{A} with one-particle map $f,$ coupling function $h$  and transition density $p.$ Then, for any observable $g \colon \mathbb{T}^N\to \mathbb{R},$ if we define
\begin{equation}\label{effgiu}
F_g(u):= \int g(x)\nu_u(dx), \qquad \forall u \in \mathbb{T}^N,
\end{equation}
with $\nu_u$ as in \eqref{nuu}, then we have
\begin{equation}\label{contractionobservable}
\operatorname{Lip}^N(F_g) \le \kappa \operatorname{Lip}^N(g)
\end{equation}
and 
\begin{equation}\label{squarecontractionobservable}
\sum_{i=1}^N\operatorname{Lip}_i^2(F_g) \le \kappa^2 \sum_{i=1}^N\operatorname{Lip}_i^2(g),
\end{equation}
with $\kappa$ as in \eqref{kappa}.
\end{lema}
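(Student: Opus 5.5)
The plan is to prove a coordinatewise Lipschitz estimate for $F_g$, namely
\[
\operatorname{Lip}_i(F_g)\le (1-c)\Big(A\,L_i+\frac{B}{N}\sum_{j=1}^N L_j\Big),\qquad L_j:=\operatorname{Lip}_j(g),
\]
with $A:=\operatorname{Lip}(f)+\operatorname{Lip}_1(h)$ and $B:=\operatorname{Lip}_2(h)$. Both \eqref{contractionobservable} and \eqref{squarecontractionobservable} then follow by summing, respectively by an $\ell^2$ triangle inequality.

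\emph{One-dimensional estimate.} Set $q:=(p-c)/(1-c)$, which is a probability density on $\mathbb T$ by \eqref{doeblin}. Then $p(x-a)\,dx=c\,dx+(1-c)\,q(x-a)\,dx$. Lebesgue measure is translation invariant, so for a function $\phi\colon\mathbb T\to\mathbb R$ and shifts $a,b\in\mathbb R$ we get
\[
\int \phi(x)\big(p(x-a)-p(x-b)\big)dx=(1-c)\int\big(\phi(y+a)-\phi(y+b)\big)q(y)\,dy .
\]
Since $d(y+a,y+b)\le|a-b|$ on $\mathbb T$, this is at most $(1-c)\operatorname{Lip}(\phi)\,|a-b|$. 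The point is that the uniform part $c$ of the noise never feels the shift; this is where the factor $(1-c)$ in $\kappa$ comes from.

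\emph{Product structure.} Write $s_j(u):=f(u_j)+\frac1N\sum_k h(u_j,u_k)$, so that $\nu_u=\bigotimes_j p(x_j-s_j(u))\,dx_j$. Fix $u,u'$ differing only in coordinate $i$. I will replace the factors of $\nu_u$ by those of $\nu_{u'}$ one coordinate at a time, a telescoping sum of $N$ terms. In the $j$-th term I integrate over $x_j$ with all other coordinates frozen and apply the one-dimensional estimate to $\phi=g(\cdot,x_{-j})$. Integrating over the remaining coordinates then gives
\[
|F_g(u)-F_g(u')|\le(1-c)\sum_{j}L_j\,|s_j(u)-s_j(u')|.
\]
For $j\ne i$, only the term $\frac1N h(u_j,u_i)$ changes, so $|s_j(u)-s_j(u')|\le \frac BN d(u_i,u_i')$. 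For $j=i$, the contributions $f(u_i)$ and the first argument of every $h(u_i,u_k)$ give at most $A\,d(u_i,u_i')$, and the diagonal term $\frac1N h(u_i,u_i)$ adds $\frac BN d(u_i,u_i')$ through its second argument. This proves the displayed bound on $\operatorname{Lip}_i(F_g)$.

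\emph{Summation.} Summing over $i$ gives $\operatorname{Lip}^N(F_g)\le(1-c)(A+B)\sum_j L_j=\kappa\operatorname{Lip}^N(g)$. For the quadratic bound, write $\bar L:=\frac1N\sum_j L_j$ and apply Minkowski's inequality in $\ell^2(\{1,\dots,N\})$:
\[
\Big(\sum_i\operatorname{Lip}_i(F_g)^2\Big)^{1/2}\le(1-c)\Big(A\|L\|_2+B\sqrt N\,\bar L\Big).
\]
By Cauchy--Schwarz, $\sqrt N\,\bar L\le\|L\|_2$, so the right-hand side is at most $\kappa\|L\|_2$. Squaring gives \eqref{squarecontractionobservable}.

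The main care point is the one-coordinate-at-a-time telescoping. Each intermediate measure is still a product measure, so a single coordinate can be isolated and Fubini applied. One also needs $\operatorname{Lip}_j(g)$ to control $\phi$ uniformly in the frozen coordinates, which is exactly how \eqref{lipi} is defined with the supremum over $x_{-j}$. The remaining ingredients, the decomposition of $p$ and the bound $|s_j(u)-s_j(u')|$ including the diagonal term, are routine.
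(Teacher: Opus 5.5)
Your proof is correct and follows the paper's argument essentially step for step. It uses the same one-coordinate-at-a-time telescoping through product measures, the same decomposition $p=c+(1-c)q$, and the same coordinatewise bound on $\operatorname{Lip}_i(F_g)$ including the diagonal term, followed by summation over $i$. The only cosmetic difference is in the quadratic bound: you use Minkowski's inequality in $\ell^2$ plus Cauchy--Schwarz, whereas the paper expands the square and applies Cauchy--Schwarz to the cross and mean-field terms, and the two are equivalent.
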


\begin{proof}
Fix $i\in\{1,\dots,N\}$ and let $u,v\in\mathbb{T}^N$ differ only in the $i$-th coordinate.  Using \eqref{effgiu}, we decompose 
\begin{equation}\label{fuminsfv}
F_g(u)-F_g(v)
=\sum_{j=1}^N \left(\int g\,d\mu_{j-1}-\int g\,d\mu_j\right),
\end{equation}
where $\mu_0=\nu_u,\,\, \mu_N=\nu_v,$
with $\nu_u,\nu_v$ as in \eqref{nuu}, and  for $j=1,\dots,N-1$ 
\begin{align*}
\mu_j=\prod_{k=1}^j \nu_{k,v}(dx_k)\prod_{k=j+1}^N \nu_{k,u}(dx_k),
\end{align*}
with $\nu_{k,u}$ as in \eqref{nuj}.

Fix $j\in\{1,\dots,N\}$. By Fubini's theorem 
\begin{equation}\label{gdecomp}
\int g\,d\mu_{j-1}-\int g\,d\mu_j
=
\int \left[\int g(x)\,\left(\nu_{j,u}(dx_j)-\nu_{j,v}(dx_j)\right)\right]\mu_{-j}(d x_{-j}),    
\end{equation}
where 
\begin{align*}
\mu_{-j}(d x_{-j}):=\prod_{k<j}\nu_{k,v}(dx_k)\prod_{k>j}\nu_{k,u}(dx_k),
\end{align*}
and $x_{-j}$ is as in \eqref{xminus} with $i =j.$ By \eqref{nuj}
\begin{align*}
\int g(x)\,\nu_{j,u}(dx_j)-\int g(x)\,\nu_{j,v}(dx_j) = \int g(x)\left[p\left(x_j-a_j(u)\right) -p\left(x_j-a_j(v)\right) \right ]dx_j,
\end{align*}
where 
\begin{equation}\label{ajei}
a_j(s) = f(s_j)+\frac{1}{N} \sum_{k=1}^N h(s_j,s_k),    \qquad s \in \mathbb{T}^N. 
\end{equation}
Let $c$ as in \eqref{doeblin} and   define
\begin{align*}
q(x) := \frac{p(x)-c}{1-c}.
\end{align*}
Then, $q$ is a probability density and, for all $x$, we can write 
\begin{equation}\label{central_Deco}
p(x) = c + (1-c)q(x).
\end{equation}
As a result, we have
\begin{align*}
&\int g(x_j,x_{-j})
\left[p(x_j-a_j(u))-p(x_j-a_j(v))\right]\,dx_j
\\
&\qquad =
(1-c)\int
\left[
g(x_j+a_j(u),x_{-j})
-
g(x_j+a_j(v),x_{-j})
\right]q(x_j)\,dx_j
\\
&\qquad \le
(1-c)\operatorname{Lip}_j(g)\,
|a_j(u)-a_j(v)|.
\end{align*}

The above inequality, combined with \eqref{fuminsfv}, \eqref{gdecomp} and \eqref{ajei}, gives 
\begin{equation}\label{estimateone}
|F_g(u)-F_g(v)|
\le (1-c)\sum_{j=1}^N \textnormal{Lip}_j(g)|a_j(u)-a_j(v)|.
\end{equation}

If $j\neq i$, then $u_j=v_j$, so only the dependence on the $i$-th coordinate inside the mean-field term changes:
\begin{equation}\label{est2}
|a_j(u)-a_j(v)|
\le \frac{1}{N}\textnormal{Lip}_2(h)\,d(u_i,v_i).
\end{equation}

If $j=i$, then both the local term and the interaction term vary:
\begin{equation}\label{Est3}
\begin{aligned}
|a_i(u)-a_i(v)|
&\le |f(u_i)-f(v_i)|
+\frac{1}{N}\sum_{k=1}^N |h(u_i,u_k)-h(v_i,v_k)|\\
&\le \Big(\textnormal{Lip}(f)+\textnormal{Lip}_1(h)\Big)d(u_i,v_i)
+\frac{1}{N}\textnormal{Lip}_2(h)\,d(u_i,v_i).    
\end{aligned}
\end{equation}

Substituting \eqref{est2},\eqref{Est3} into \eqref{estimateone}, we obtain 
\begin{equation}\label{Lipif}
\textnormal{Lip}_i(F_g)
\le (1-c)\Big(\textnormal{Lip}(f)+\textnormal{Lip}_1(h)\Big)\textnormal{Lip}_i(g)
+\frac{1-c}{N}\textnormal{Lip}_2(h)\sum_{j=1}^N \textnormal{Lip}_j(g).    
\end{equation}
Summing  over $i=1,\dots,N,$ we obtain \eqref{contractionobservable}.

By \eqref{Lipif},  we have
\begin{align*}
\sum_{i=1}^N \textnormal{Lip}_i(F_g)^2
\le A^2 \sum_{i=1}^N \textnormal{Lip}_i(g)^2
+ 2AB \left(\sum_{i=1}^N \textnormal{Lip}_i(g)\right)^2
+ N B^2 \left(\sum_{j=1}^N \textnormal{Lip}_j(g)\right)^2.
\end{align*}
where we introduced the notation
\begin{align*}
A := (1-c)\Big(\textnormal{Lip}(f)+\textnormal{Lip}_1(h)\Big),
\qquad
B := \frac{(1-c)}{N}\textnormal{Lip}_2(h).
\end{align*}
Squaring \eqref{Lipif}, summing the above inequalities over $i=1,\dots,N,$ and applying the Cauchy-Schwarz inequality we obtain \eqref{squarecontractionobservable}.
\end{proof}
\begin{TheoremproofB}

Let us fix $N\ge 2$, and the $N$-dimensional Markov chain satisfying Hypothesis \ref{A}, with one-particle map $f \in \operatorname{Lip}(\mathbb{T}), h \in \operatorname{Lip}(\mathbb{T}^2),$ density $p$ satisfying \eqref{doeblin} and $\kappa$ as in \eqref{kappa}.

\textbf{Step 1: sub-Gaussian measures are preserved.}

Let $\mu$ be sub-Gaussian and set $\bar \mu = \mathcal{P}_N(\mu)$, so that, by \eqref{transferop} and the definition of $\nu_u$ in \eqref{nuu}, $\bar \mu(dx)=\mathbb{E}_{\mu}[\nu_u(dx)].$

Fix an observable  $g :\mathbb{T}^N \to \mathbb{R}.$ Using the definition of $F_g(u)$ in \eqref{effgiu}, we have 
\begin{equation}\label{expectationdecomp}
\begin{aligned}
\mathbb{E}_{\bar \mu}\left[e^{\lambda(g-\int g d\bar \mu)}\right]
&= \mathbb{E}_{\mu}\left[e^{\lambda(F_g(u)-\int g d\bar \mu)}\mathbb{E}_{\nu_u}\left[e^{\lambda(g-F_g(u))}\right]\right]\\
&\le  \mathbb{E}_{\mu}\left[e^{\lambda\left(F_g(u)-\int gd\bar \mu\right)}\right]
\exp\left(\frac{\lambda^2}{8}\sum_{i=1}^N \operatorname{Lip}_i(g)^2\right)
\\
&
\le \exp\left(\frac{\lambda^2}{2}\sum_{i=1}^N \operatorname{Lip}_i(F_g)^2\right)\exp\left(\frac{\lambda^2}{8}\sum_{i=1}^N \operatorname{Lip}_i(g)^2\right),
\end{aligned}
\end{equation}

where, in the last line, we applied first to measure $\nu_u$ the concentration inequality for product measures in  \cite[Theorem 6.2]{boucheron2013concentration}, and then we used the fact that $\mu$ is sub-Gaussian, along with  the fact that 
$$
\int F_g(u)d\mu(u) = \int g\d\bar \mu.
$$

Combining \eqref{expectationdecomp} with \eqref{squarecontractionobservable}, we obtain 
\begin{align*}
\mathbb{E}_{\bar \mu}\left[e^{\lambda(g-\int g d\bar \mu)}\right]\le \exp\left[
\left(\frac{1}{8}
+ \frac{\kappa^2}{2}
\right)
\lambda^2 \sum_{i=1}^N \textnormal{Lip}_i(g)^2
\right],
\end{align*}
where $\kappa$ is as in \eqref{kappa}. Again using \eqref{kappa}, we get
$$
\frac{1}{8}
+ \frac{\kappa^2}{2} <
\frac{1}{8}+\frac{3}{8}
=
\frac{1}{2},
$$
and in particular we conclude that $\bar \mu$ is sub-Gaussian.

\medskip 

\textbf{Step 2: contraction of the $N$-dimensional transfer operator}

Let $\mu_1,\mu_2 \in \mathcal{M}_1(\mathbb{T}^N)$ and let $\mathcal{P}_N$ be the transfer operator in \eqref{transferop}.
By \eqref{transferop},\eqref{nuu}, and \eqref{effgiu}, we have that, for any  observable $g \colon \mathbb{T}^N \to \mathbb{R},$ 
\begin{align*}
\int g(x)(\mathcal{P}_N(\mu_1-\mu_2))(dx) = \int F_g(u) (\mu_1-\mu_2)(du) \le \operatorname{Lip}^N(F_g)W^1_N(\mu_1,\mu_2),
\end{align*}
from which \eqref{tensorizedcontracton} follows by \eqref{contractionobservable}. By Banach fixed point theorem, there exists unique fixed point $\rho_N \in \mathcal{M}_1(\mathbb{T}^N).$ The fact that $\rho_N$ is sub-Gaussian follows from the fact that, by step $1$, if $\mu$ is sub-Gaussian, so is $\mathcal{P}^k_N(\mu)$ for all $k\ge 0$, and, by \eqref{tensorizedcontracton}
$$
W^1_N(\mathcal{P}^k_N(\mu), \rho_N) \to 0,\qquad k\to \infty.
$$
\end{TheoremproofB}

\section{Proof of Theorem \ref{thm2}}\label{sec4}
In this section we prove Theorem \ref{thm2}. We first record an estimate that will be used often throughout the proof. 

Let $p$ be a probability density satisfying \eqref{doeblin} with $c >0$. As in \eqref{central_Deco}, we can write
\begin{align*}
p(x) = c+ (1-c)q(x),
\end{align*}
where $q:=\frac{p-c}{1-c}$ is a probability density. 
If $a,b \in \mathbb T$, using the above decomposition and the definition of $W^1_1$ in \eqref{wasserstainspace}, we get 
\begin{equation}\label{key1dwasserstein}
W^1_1(p(x-a)dx,p(x-b)dx)\le (1-c)d(a,b).
\end{equation}

\begin{TheoremproofC}

Let us fix $N\ge 2$, and the $N$-dimensional Markov chain satisfying Hypothesis \ref{A}, with one-particle map $f \in \operatorname{Lip}(\mathbb{T}), h \in \operatorname{Lip}(\mathbb{T}^2),$ density $p$ satisfying \eqref{doeblin} and $\kappa$ as in \eqref{kappa}. 

\medskip 

\textbf{Step 1: contraction estimate for the STO.}

We first prove the one-step contraction estimate for the self-consistent transfer operator in  item \ref{one}.  Let $\mathcal{P}$ denote the self-consistent transfer operator as in \eqref{STO}. Furthermore, let $\nu_1,\nu_2 \in \mathcal{M}_1(\mathbb T)$ and define, for $i=1,2$,  $\bar \nu_i :=\mathcal{P}(\nu_i)$. Let $p$ be as in \eqref{doeblin}.
By \eqref{STO}, if  $g \colon \mathbb{T} \to \mathbb{R}$ satisfies $\operatorname{Lip}(g) \le 1$, we can write 
\begin{equation}\label{decomposition}
\int g(x) d(\bar \nu_1-\bar \nu_2)(x) = I_1+I_2
\end{equation}
where 
\begin{equation}\label{i1}
I_1 := \int \left(\int g(x) p\left(x-f(u)-\int h(u,v)d\nu_1(v) \right)dx\right)d( \nu_1-\nu_2)(u)
\end{equation}
and 
\begin{equation}\label{i2}
I_2 := \int \left[\int g(x)\left( p\left(x-f(u)-\int h(u,v)d\nu_1(v) \right) -   p\left(x-f(u)-\int h(u,v)d\nu_2(v)\right)\right) dx\right]d\nu_2(u)
\end{equation}
To estimate $I_1$, note that
\begin{align*}
|I_1| &\le \operatorname{Lip}\left(u\mapsto \int g(x) p\left(x-f(u)-\int h(u,v)d\nu_1(v) \right)dx\right)W^1(\nu_1,\nu_2)\\
&\le \sup_{u \neq u'}\frac{\left|\int g(x) p\left(x-f(u)-\int h(u,v)d\nu_1(v) \right)dx-\int g(x) p\left(x-f(u')-\int h(u',v)d\nu_1(v) \right)dx
\right|}{d(u,u')}W^1(\nu_1,\nu_2)
\\
&\le (1-c)\left(\operatorname{Lip}(f)+\operatorname{Lip}_1(h)\right)W^1(\nu_1,\nu_2),
\end{align*}
where, between the second and the third lines, we used  \eqref{key1dwasserstein}.

To estimate $I_2$, note that
\begin{align*}
|I_2|
&\le
\int
W^1_1\left(
p\left(x-f(u)-\int h(u,v)d\nu_1(v)\right)dx,\,
p\left(x-f(u)-\int h(u,v)d\nu_2(v)\right)dx
\right)d\nu_2(u)
\\
&\le
(1-c)\int
\left|
\int h(u,v)d\nu_1(v)
-
\int h(u,v)d\nu_2(v)
\right|d\nu_2(u)
\\
&\le
(1-c)\operatorname{Lip}_2(h)W^1_1(\nu_1,\nu_2),
\end{align*}
where again, between the first and second lines, we used \eqref{key1dwasserstein}. Combining \eqref{decomposition}, \eqref{i1}, \eqref{i2}, the definition of $\kappa$ in \eqref{kappa} and the above estimates, we obtain
\[
W^1_1\bigl(\mathcal{P}(\nu_1),\mathcal{P}(\nu_2)\bigr)
\le
(1-c)\left(\operatorname{Lip}(f)+\operatorname{Lip}_1(h)+\operatorname{Lip}_2(h)\right)
W^1_1(\nu_1,\nu_2)
=
\kappa W^1_1(\nu_1,\nu_2),
\]
which proves item \ref{one}.

\medskip

\textbf{Step 2: invariance neighbourhood of $\rho^{\otimes N}$}

In this step we prove item \ref{two}. Let $\rho$ be the unique fixed point of $\mathcal{P}$, $\bar C$ as in \eqref{barC} and the set $B_{\bar C,N}(\rho)$ as defined in \eqref{BCN} with $C=\bar C$ and $q =\rho$. Let $\mu \in B_{\bar C,N}(\rho).$ By the contraction of $\mathcal{P}_N$ in \eqref{tensorizedcontracton}, we have 
\begin{equation}\label{invneigh}
W^1_N(\mathcal{P}_N(\mu),\rho^{\otimes N}) \le \kappa \bar C N^{-\frac{1}{2}} + W^1_N(\mathcal{P}_N(\rho^{\otimes N}),\rho^{\otimes N})
\end{equation}

We have
\begin{equation}\label{xi1}
\mathcal{P}_N(\rho^{\otimes N})(dx) = \int  \prod_{j=1}^N p_j (x_j,u) dx_j\rho^{\otimes N}(du),
\end{equation}
where  
\begin{equation}\label{pig}
p_j(x_j,u):= p\left(x_j-f(u_j) -\frac{1}{N}\sum_{k=1}^N h(u_j,u_k) \right),
\end{equation}
and, since $\mathcal{P}(\rho) = \rho$, with $\mathcal{P}$ as in \eqref{STO}, we get 
\begin{equation}\label{barxi}
\rho^{\otimes N}(dx) = \int  \prod_{j=1}^N \bar p(x_j,u_j) dx_j \rho^{\otimes N}(du),
\end{equation}
where 
$$
\bar p(x,u) = p\left(x-f(u) -\int  h(u,v) \rho(dv)\right).
$$
By \eqref{xi1} and \eqref{barxi}, given any observable $g,$ we can decompose
\begin{equation}\label{telescope}
\int g(x)(\mathcal{P}_N(\rho^{\otimes N})-\rho^{\otimes N})(dx) =  \sum_{l=1}^N \int  g(x)  (\xi_{l}-\xi_{l-1})(dx),
\end{equation}
where
\begin{align*}
\xi_l:= \int \prod_{i\le l}\left(p_i(x_i,u) dx_i \right)\prod_{i>l}(\bar p(x_i,u)dx_i)\rho^{\otimes N}_0(du),\qquad \forall l=0,\dots,N.
\end{align*}
Now observe that, for all  $l=1,\dots ,N$
\begin{align*}
&\int  g(x)  (\xi_{l}-\xi_{l-1})(dx) \\
&= \int\left(\int  g(x)\left[p_l(x_l,u)-\bar p(x_l,u_l)\right]dx_l\right) \prod_{i< l}(p_i(x_i,u)dx_i) \prod_{i>l}\bar (p(x_i,u)dx_i)\rho^{\otimes N}(du)\\
&\le \operatorname{Lip}_{l}(g) \int W^1_1(p_l(x_l,u)dx_l,\bar p(x_l,u_l)dx_l) \rho^{\otimes N}(du) \\
&\le \operatorname{Lip}_{l}(g)(1-c)\mathbb{E}_{\rho^{\otimes N}}\left[\left|\frac{1}{N}\sum_{k=1}^N h(u_l,u_k)-\int h(u_l,v)\rho(dv)\right|\right]
\\
&\le \operatorname{Lip}_{l}(g)(1-c)\left(\operatorname{Lip}_2(h)\right)N^{-\frac{1}{2}},
\end{align*}
where, between the third and the fourth lines, we used the decomposition in  \eqref{key1dwasserstein}, and in the last line we used the concentration inequalities for product measures in \cite[Theorem 6.2]{boucheron2013concentration}, after conditioning on $u_l.$ Summing the above over $l=1,\dots,N$ and using \eqref{telescope} we get 
\begin{equation}\label{firstw1}
W^1_N(\mathcal{P}_N(\rho^{\otimes N}),\rho^{\otimes N}) \le (1-c)\left(\operatorname{Lip}_2(h)\right)N^{-\frac{1}{2}},
\end{equation}
which, combined with \eqref{invneigh} and the definition of $\bar C$ in \eqref{barC}, proves the inclusion in \eqref{firstinclusion}. The last assertion follows from the $W^1_N$ global contraction of $\mathcal{P}_N$ in \eqref{tensorizedcontracton}.
\end{TheoremproofC}
\section{Propagation of chaos}\label{sec5}
In this section we prove Theorem \ref{thm3} and Corollary \ref{cor1}.
\begin{TheoremproofE}
Let $\mu \in \mathcal{M}_1(\mathbb T^N).$
We first prove \eqref{firstlasota}. Observe that 
\begin{equation}\label{chain}
\begin{aligned}
W_N^1\left(
\mathcal P_N^n(\mu),
\left(\mathcal P^n(\pi_1\mu)\right)^{\otimes N}
\right)
&\leq
W_N^1\left(
\mathcal P_N^n(\mu),
\mathcal P_N\left(
\left(\mathcal P^{n-1}(\pi_1\mu)\right)^{\otimes N}
\right)
\right)
\\
&+
W_N^1\left(
\mathcal P_N\left(
\left(\mathcal P^{n-1}(\pi_1\mu)\right)^{\otimes N}
\right),
\left(\mathcal P^n(\pi_1\mu)\right)^{\otimes N}
\right) \\
&\le \kappa W_N^1\left(
\mathcal P_N^{n-1}(\mu),
\left(\mathcal P^{n-1}(\pi_1\mu)\right)^{\otimes N}
\right) + (1-c)\operatorname{Lip}_2(h) N^{-\frac{1}{2}},
\end{aligned}
\end{equation}
where between the second and the last lines we used \eqref{tensorizedcontracton} and the same telescoping argument used to prove \eqref{firstw1}. Reiterating \eqref{chain}, we obtain \eqref{firstlasota}.

We now prove \eqref{secondlasota}. Let $p \in \operatorname{BV}(\mathbb{T}),$ then we have the following regularizing inequality.
\begin{equation}\label{Key}
\|p(\cdot-a)-p(\cdot-b)\|_{TV}\le \frac{\operatorname{Var}(p)}{2}d(a,b).
\end{equation}
For $1\le k\le N,$ observe that
\begin{equation}\label{TV}
\begin{aligned}
\left\|
\pi_{\{1,\dots,k\}}\mathcal{P}_N^n(\mu)
-
\left(\mathcal P^n(\pi_1\mu)\right)^{\otimes k}
\right\|_{TV}
&\le
\left\|
\pi_{\{1,\dots,k\}}\mathcal{P}_N\left(
\mathcal P_N^{n-1}(\mu)
-
\left(\mathcal P^{n-1}(\pi_1\mu)\right)^{\otimes N}
\right)
\right\|_{TV}
\\
&\quad+
\left\|
\mathcal{P}^{\otimes k}
\left(
\left(\mathcal P^{n-1}(\pi_1\mu)\right)^{\otimes k}
\right)
-
\pi_{\{1,\dots,k\}} \mathcal{P}_N
\left(
\left(\mathcal P^{n-1}(\pi_1\mu)\right)^{\otimes N}
\right)
\right\|_{TV}.
\end{aligned}
\end{equation}

For the first term, observe that, if $\phi$ is an observable with $|\phi|_{\infty} \le 1,$ then we have 
\begin{equation}\label{kappavar}
\begin{aligned}
&\int \phi(x)\prod_{i \in I} (p_i(x_i,u)dx_i)
d\left(
\mathcal P_N^{n-1}(\mu)
-
\left(\mathcal P^{n-1}(\pi_1\mu)\right)^{\otimes N}
\right)(du)
\\
&\qquad\le
\operatorname{Lip}^N\left(
u\mapsto
\int \phi(x)\prod_{i \in I} (p_i(x_i,u)dx_i)
\right)
W^1_N\left(
\mathcal P_N^{n-1}(\mu),
\left(\mathcal P^{n-1}(\pi_1\mu)\right)^{\otimes N}
\right),
\end{aligned}
\end{equation}
with $p_i$ as in \eqref{pig}. A telescoping  argument based on a decomposition analogous to the one used in \eqref{telescope}, along with \eqref{Key}, gives 
\begin{equation}\label{telescopingargument}
\operatorname{Lip}^N\left(
u  \mapsto 
\int \phi(x) \prod_{i \in I} (p_i(x_i,u)dx_i)
\right) 
\le 
k\operatorname{Var}(p)
\left(
\operatorname{Lip}(f)
+\operatorname{Lip}_1(h)
+\operatorname{Lip}_2(h)
\right)
:=L_k,
\end{equation}
so that, by \eqref{TV}, \eqref{kappavar} and \eqref{firstlasota},
\begin{align*}
&\left\|
\pi_{\{1,\dots,k\}} \mathcal{P}_N^n(\mu)
-
\left(\mathcal P^n(\pi_1\mu)\right)^{\otimes k}
\right\|_{TV}
\\
&\qquad\le
\frac{L_k}{2}
\left[
\kappa^{n-1}
W_N^1\left(
\mu,
(\pi_1\mu)^{\otimes N}
\right)
+
\bar C N^{-\frac12}
\right]
\\
&\qquad\quad+
\left\|
\mathcal{P}^{\otimes k}
\left(
\left(\mathcal P^{n-1}(\pi_1\mu)\right)^{\otimes k}
\right)
-
\pi_{\{1,\dots,k\}} \mathcal{P}_N
\left(
\left(\mathcal P^{n-1}(\pi_1\mu)\right)^{\otimes N}
\right)
\right\|_{TV}.
\end{align*}
A computation similar to the one made to establish \eqref{telescopingargument} gives 
\begin{align*}
&\left\|
\mathcal{P}^{\otimes k}
\left(
\left(\mathcal P^{n-1}(\pi_1\mu)\right)^{\otimes k}
\right)
-
\pi_I \mathcal{P}_N
\left(
\left(\mathcal P^{n-1}(\pi_1\mu)\right)^{\otimes N}
\right)
\right\|_{TV}
\\
&\qquad\le 
\frac{k\operatorname{Var}(p)}{2}
\operatorname{Lip}_2(h) N^{-\frac12}
:=
L'_k N^{-\frac12},
\end{align*}
from which we conclude that
\begin{align*}
&\left\|
\pi_I \mathcal{P}_N^n(\mu)
-
\left(\mathcal P^n(\pi_1\mu)\right)^{\otimes k}
\right\|_{TV}
\\
&\qquad\le
\frac{L_k}{2}\kappa^{n-1}
W_N^1\left(
\mu,
(\pi_1\mu)^{\otimes N}
\right)
+
\left(
\frac{L_k\bar C}{2}
+
L'_k
\right)
N^{-\frac12},
\end{align*}
and \eqref{secondlasota} follows.
\end{TheoremproofE}

\begin{TheoremproofA}
The proof of Corollary \ref{cor1} follows from the fact that if a family of exchangeable measures $\{\mu_N\}_{N\ge2}$ satisfies  \eqref{kcong}, then it satisfies also 
\begin{align*}
W_N^1\left(\mu_N,(\pi_1\mu_N)^{\otimes N}\right)\to0. \qquad  N \to \infty.
\end{align*}
This follows from the standard characterization of chaos in terms of convergence of the empirical measures \cite[Proposition 2.2]{Sznitman1991}, together with the representation of $W_N^1$ for exchangeable measures obtained by symmetrizing the test observables. Because of the above convergence, \eqref{firstlasota} implies \eqref{propagationw1n} and \eqref{secondlasota} implies \eqref{whatsithis}.
\end{TheoremproofA}
\bibliographystyle{plain} 
\bibliography{bibliog}
\end{document}